\newtheorem{theorem}{Theorem}
\newtheorem{prop}[theorem]{Proposition}
\Crefname{prop}{Proposition}{Propositions}
\definecolor{bg}{rgb}{.95, .95, .95}
\newcommand{\code}{\mintinline{py}}
\renewcommand{\S}{\Sigma}
\newcommand{\Z}{\mathbb Z}
\newcommand{\Q}{\mathbb Q}
\renewcommand{\d}{\partial}
\DeclareMathOperator{\Hom}{Hom}
\DeclareMathOperator{\Tor}{Tor}
\newcommand{\CKH}{\mathcal C \mathrm{Kh}}
\newcommand{\KH}{\mathrm{Kh}}
\newcommand{\sub}{\subseteq}
\newlist{myenumerate}{enumerate}{1}
\setlist[myenumerate]{label=\textup{(\alph*)}}
\newcommand{\HSimage}[1]{\raisebox{-.375\height}{\includegraphics[scale=.18,trim=0 .3cm 0 0]{#1}}\null}
\newcommand{\saddle}{\raisebox{-.1\height}{\includegraphics[scale=.2]{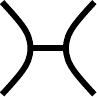}}}
\newcommand{\crossing}{\raisebox{-.1\height}{\includegraphics[scale=.2]{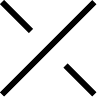}}}
\newcommand{\zerosmoothing}{\raisebox{-.1\height}{\includegraphics[scale=.2]{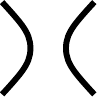}}}
\newcommand{\onesmoothing}{\raisebox{-.1\height}{\includegraphics[scale=.2]{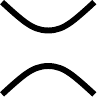}}}
\newcommand{\death}{\includegraphics[scale=.2,trim=0 .2cm 0 0]{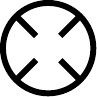}\null}
\newcommand{\birth}{\includegraphics[scale=.15,trim=0 .2cm 0 0]{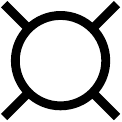}\null}
\newcommand{\kdot}{\includegraphics[scale=.2,trim=0 .2cm 0 0]{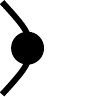}\null}
\newcommand{\kocirc}{\HSimage{circle_one.pdf}}
\newcommand{\kxcirc}{\HSimage{circle_x.pdf}}
\newcommand{\kcobdo}{\HSimage{cobsmooth1o.pdf}}
\newcommand{\kcobdx}{\HSimage{cobsmooth1x.pdf}}
\newcommand{\kcobdoo}{\HSimage{cobsmooth1oo.pdf}}
\newcommand{\kcobdox}{\HSimage{cobsmooth1ox.pdf}}
\newcommand{\kcobdxo}{\HSimage{cobsmooth1xo.pdf}}
\newcommand{\kcobdxx}{\HSimage{cobsmooth1xx.pdf}}
\newcommand{\kcobio}{\HSimage{cobsmooth2o.pdf}}
\newcommand{\kcobix}{\HSimage{cobsmooth2x.pdf}}
\newcommand{\kcobiox}{\HSimage{cobsmooth2ox.pdf}}
\newcommand{\kcobixo}{\HSimage{cobsmooth2xo.pdf}}
\newcommand{\kcobixx}{\HSimage{cobsmooth2xx.pdf}}
\newcommand{\kropos}{\HSimage{r1_positive.pdf}}
\newcommand{\kroneg}{\HSimage{r1_negative.pdf}}
\newcommand{\kroarc}{\HSimage{r1_arc.pdf}}
\newcommand{\krosa}{\HSimage{r1_smoothing_a.pdf}}
\newcommand{\krosb}{\HSimage{r1_smoothing_b.pdf}}
\newcommand{\kroma}{\HSimage{r1_map_a.pdf}}
\newcommand{\kromb}{\HSimage{r1_map_b.pdf}}
\newcommand{\kromc}{\HSimage{r1_map_c.pdf}}
\newcommand{\kromd}{\HSimage{r1_map_d.pdf}}
\newcommand{\krtcr}{\HSimage{r2_cross.pdf}}
\newcommand{\krtcrl}{\HSimage{r2_crossless.pdf}}
\newcommand{\krtsa}{\HSimage{r2_smoothing_a.pdf}}
\newcommand{\krtsb}{\HSimage{r2_smoothing_b.pdf}}
\newcommand{\krtsc}{\HSimage{r2_smoothing_c.pdf}}
\newcommand{\krtsd}{\HSimage{r2_smoothing_d.pdf}}
\newcommand{\krtma}{\HSimage{r2_map_a.pdf}}
\newcommand{\krtmb}{\HSimage{r2_map_b.pdf}}
\begin{document}

\title{A new computational tool for Khovanov cobordism maps}
\author[Zs. Fehér]{Zsombor Fehér}
\address{Mathematical Institute, University of Oxford, Andrew Wiles Building, Radcliffe Observatory Quarter, Woodstock Road, Oxford, OX2 6GG, UK}
\email{zsombor012@gmail.com}
\begin{abstract}
We describe a Python module that we developed to calculate cobordism maps induced on Khovanov homology. As applications of our program, we compute these maps for all incompressible Seifert surfaces for prime knots up to 10 crossings, and distinguish many ribbon disks arising from symmetries of the boundary knots.
\end{abstract}
\maketitle

	\section*{Introduction}
Khovanov homology assigns an algebraic object $\KH(L)$ to every oriented link $L\sub S^3$. Computer programs \cites{knotjob,sage} are available to calculate this invariant, which can be used to distinguish links.

To any smoothly embedded surface $\S\sub S^3\times [0,1]$ between two links $L_0$ and $L_1$, Khovanov homology assigns a map $\KH(\S)\colon\KH(L_0)\to\KH(L_1)$. These maps provide a method to distinguish surfaces up to smooth isotopy. However, despite the combinatorial nature of their definition, no computational tool was available for calculating these maps. We developed a program to compute them, as detailed in \cref{sec:program}. This Python module is available on GitHub \cite{github}.

In \cref{sec:applications}, we present two applications. We calculate the cobordism maps for all incompressible Seifert surfaces for prime knots up to 10 crossings (\cref{table-Seifert}), and distinguish many ribbon disks arising from symmetries of the boundary knots (\cref{table-ribbon}). For instance, we present five distinct ribbon disks for the knot $10_{123}$, obtained via a rotational symmetry, which are distinguished by Khovanov homology (\cref{fig:ribbon}).

	\section{Background}
We give a brief overview of the definition of Khovanov homology, following \cite{Hayden-Sundberg}. Our focus is on the combinatorial details that are crucial for implementing the computer program. For a broader discussion of the context and philosophy underlying Khovanov homology, see \cites{Bar-Natan-tangles,Khovanov}. Throughout, we work with $\Z$ coefficients.

		\subsection{Khovanov homology of links}
Let $L\sub S^3$ be an oriented link. Choose a diagram $D$ of $L$, and an enumeration $X = (x_1,\dots,x_n)$ of its crossings. We define a chain complex $(\CKH(D, X), d)$ associated to the data $(D, X)$ as follows.

For any binary sequence $\sigma = (\sigma_1,\dots,\sigma_n)\in\{0,1\}^n$, we consider the \emph{smoothing}~$D_\sigma$, which is obtained by replacing each crossing $x_i$~{\crossing} with a 0-smoothing~{\zerosmoothing} or 1-smoothing~{\onesmoothing}, depending on $\sigma_i$. We call the connected components of $D_\sigma$ \emph{loops}. A \emph{labelled smoothing} $\alpha_\sigma$ is a labelling of the loops of $D_\sigma$ with a $1$ or $x$. Let $\CKH(D, X)$ be the free $\Z$-module generated by all the labelled smoothings $\alpha_\sigma$ of $D$.

For simplicity, we describe the differential map $d$ using the Morse induced chain maps that are defined in \cref{table_morse} (taken from \cite{Hayden-Sundberg}). Let $\alpha_\sigma$ be a labelled smoothing with $\sigma = (\sigma_1,\dots,\sigma_n)$. For an $i$ with $\sigma_i=0$, let $m_i$ denote the chain map induced by the Morse saddle move {\saddle} that turns the 0-smoothing of $x_i$ into the 1-smoothing. (Thus, $m_i(\alpha_\sigma)$ is a sum of 0, 1, or 2 labelled smoothings.) We define the differential map $d\colon \CKH(D, X)\to\CKH(D, X)$ on the generators by
\[
d(\alpha_\sigma) = \sum_{\substack{i \\ \sigma_i=0 }} (-1)^{\xi_i} m_i(\alpha_\sigma),
\qquad
\xi_i = \sum_{\substack{j \\ j<i}} \sigma_j,
\]
and extend it linearly to $\CKH(D, X)$. Then it can be verified that $d^2 = 0$, showing that $(\CKH(D, X), d)$ is indeed a chain complex. Its homology is denoted by $\KH(D, X)$. It turns out that up to isomorphism, $\KH(D, X)$ is unaffected by our particular choices of $D$ and $X$ \cite{Khovanov}, and thus, it can be denoted by $\KH(L)$, the \emph{Khovanov homology} of the link $L$.

\begin{table}[t]
\begin{center}
\renewcommand{\arraystretch}{2.1}
\begin{tabular}{|c|c|l|}
\hline
\text{Morse move} & \text{Ornament} &  \multicolumn{1}{c|}{\text{Definition of chain map}} \\
\hline
birth & \birth & $\begin{array}{l} \hspace{0.95em} 1 \hspace{0.95em} \mapsto \kocirc \vspace{.425em} \end{array}$ \\
\hline
death & \death & $\begin{array}{l} \kocirc \mapsto \hspace{0.95em} 0 \\ \kxcirc \mapsto \hspace{0.95em} 1 \vspace{.425em} \end{array}$ \\
\hline
\multirow{2}{80pt}{\hspace{2.3em}\vspace{-4.4em}saddle} & $\begin{array}{l} \vspace{-6em}\saddle \end{array}$ & $\begin{array}{l} \kcobdoo \mapsto \kcobio \\ \kcobdox \mapsto \kcobix \\ \kcobdxo \mapsto \kcobix \\ \kcobdxx \mapsto \hspace{0.95em} 0 \vspace{.425em} \end{array}$ \\ \cline{3-3} & & $\begin{array}{l} \kcobdo \mapsto \kcobiox + \kcobixo \\ \kcobdx \mapsto \kcobixx \vspace{.425em} \end{array}$ \\
\hline
\end{tabular}
\renewcommand{\arraystretch}{1}
\end{center}
\caption{Chain maps induced by a Morse move.}
\label{table_morse}
\end{table}

The chain complex $\CKH(D, X)$ is bigraded by homological grading $h$ and quantum grading $q$. Let $n_+,n_-$ be the number of positive and negative crossings in $D$. For a labelled smoothing $\alpha_\sigma$, let $|\sigma|$ be the number of 1-smoothings in $\sigma$, and let $v_+(\alpha_\sigma), v_-(\alpha_\sigma)$ be the number of 1-labels and $x$-labels in $\alpha_\sigma$. The grading $(h,q)$ of $\alpha_\sigma$ is defined by
\begin{align*}
h(\alpha_\sigma) &= |\sigma| - n_-, \\
q(\alpha_\sigma) &= v_+(\alpha_\sigma) - v_-(\alpha_\sigma) + h(\alpha_\sigma) + n_+ - n_-,
\end{align*}
and $\CKH^{h,q}(D, X)$ is defined as the submodule generated by such $\alpha_\sigma$. Then the differential map is also bigraded, $d\colon \CKH^{h,q}(D, X) \to \CKH^{h+1,q}(D, X)$. Hence, the bigrading descends to homology, and we have that $\KH(L) = \bigoplus_{h,q}\KH^{h,q}(L)$ is a bigraded $\Z$-module.

		\subsection{Crossing order change} \label{sec:crossing-order-change}
We explicitly construct the isomorphism of $\CKH(D, X)$ for a different choice of $X$, as this will be essential for computations. Consider two enumerations $X = (x_1,\dots,x_n)$ and $X' = (x_{\pi(1)},\dots,x_{\pi(n)})$ for some permutation $\pi$. Then any labelled smoothing $\alpha_\sigma\in \CKH(D, X)$ with $\sigma = (\sigma_1,\dots,\sigma_n)$ naturally corresponds to a labelled smoothing $\alpha'_{\sigma'}\in \CKH(D, X')$ that has $\sigma' = (\sigma_{\pi(1)},\dots,\sigma_{\pi(n)})$ and the same labels. By definition, the differential $d'$ on $\CKH(D, X')$ is given by
\[
d'(\alpha'_{\sigma'}) = \sum_{\substack{i \\ \sigma_i=0 }} (-1)^{\xi'_i} m_i(\alpha'_{\sigma'}),
\qquad
\xi'_i = \;\;\sum_{\mathclap{\substack{j \\ \pi^{-1}(j) < \pi^{-1}(i)}}} \;\;\sigma_j.
\]
We define the isomorphism $\Phi\colon \CKH(D, X) \to \CKH(D, X')$ on the generators by
\[
\Phi(\alpha_\sigma) = (-1)^{\pi_\sigma} \alpha'_{\sigma'},
\qquad
\pi_\sigma = \;\;\sum_{\mathclap{\substack{j, k \\ j<k \\ \pi^{-1}(j) > \pi^{-1}(k)}}} \;\;\sigma_j \sigma_k,
\]
and extend it linearly. Then it can be checked that $\Phi(d(\alpha_\sigma)) = d'(\Phi(\alpha_\sigma))$. Therefore, $(\CKH(D, X), d)$ and $(\CKH(D, X'), d')$ are isomorphic chain complexes, and $\KH(D, X)\cong \KH(D, X')$.

We also provide a more intuitive perspective on this isomorphism, by describing the differential map in a way that is independent of the crossing order, as follows. For a labelled smoothing $\alpha_\sigma$, let $i_1 < \dots < i_k$ represent all indices $i$ where $\sigma_i = 1$, and replace $\alpha_\sigma$ with the formal expression $\alpha_\sigma\cdot \omega_\sigma$, where $\omega_\sigma = dx_{i_1}\wedge\dots\wedge dx_{i_k}$. By substituting each labelled smoothing ($\alpha_\sigma$ and every $m_i(\alpha_\sigma)$) in this way within the definition of $d$, the differential map can be written formally as
\[
d = m_1\cdot dx_1+\dots+m_n\cdot dx_n,
\]
where $m_i\cdot dx_i$ acts on a labelled smoothing as
\[
(m_i\cdot dx_i)(\alpha_\sigma\cdot \omega_\sigma) = m_i(\alpha_\sigma)\cdot (dx_i\wedge\omega_\sigma).
\]
Indeed, the summation restricts to the indices $i$ with $\sigma_i = 0$ because $dx_i\wedge dx_i = 0$, and the signs $(-1)^{\xi_i}$ in our original definition of $d$ come from $dx_i\wedge dx_j = -dx_j\wedge dx_i$, when we change $dx_i\wedge \omega_\sigma$ to have increasing indices (as $m_i(\alpha_\sigma)$ has a new 1-smoothed crossing $x_i$).

With this order-independent view of $d$, it is clear what happens when we change the order of crossings to $X'$. The new $\omega'_{\sigma'}$ that uses the ordering $X'$ has $\omega'_{\sigma'} = \pm \omega_\sigma$, where the $\pm$ sign depends on the parity of the permutation of $dx_{i_1},\dots,dx_{i_k}$ in $\omega'_{\sigma'}$. In the above definition of $\Phi$, $\alpha_\sigma$ gets this sign, whose explicit definition is $(-1)^{\pi_\sigma}$. For example, if $X = (x_1, x_2, x_3)$ and $X' = (x_1, x_3, x_2)$, then
\[
\Phi(\alpha_{110} + \alpha_{101} + \alpha_{011})
=    \alpha_{101}'+ \alpha_{110}'- \alpha_{011}'.
\]

		\subsection{Khovanov homology of cobordisms}

Let $L_0,L_1$ be links. A \emph{cobordism} from $L_0$ to $L_1$ is a smoothly embedded compact surface $\S\sub S^3\times[0,1]$ whose boundary is $L_0\times\{0\}\cup L_1\times\{1\}$. Any cobordism can be represented as a \emph{movie}: a sequence of diagrams $D_0,\dots,D_N$, where $D_{i+1}$ is obtained from $D_i$ by an \emph{elementary move} (Reidemeister or Morse move), and $D_0,D_N$ represent the links $L_0,L_1$, respectively. We often write the cobordism as a function $\S\colon L_0\to L_1$.

Given a cobordism $\S\colon L_0\to L_1$, we will associate to $\S$ a group homomorphism $\KH(\S)\colon \KH(L_0)\to \KH(L_1)$. Moreover, if $\S$ is orientable, with oriented boundary $\d\S=(L_0\times\{0\})\cup (L_1^r\times\{1\})$ (where $L_1^r$ denotes $L_1$ with reversed orientation), then $\KH(\S)$ will be a bigraded map:
\[
\KH(\S)\colon \KH^{h,q}(L_0)\to \KH^{h,q+\chi(\S)}(L_1).
\]

We now give the definition of this map, using the movie $D_0,\dots,D_N$. We do this by defining a chain map $\CKH(D_i, X_i)\to \CKH(D_{i+1}, X_{i+1})$ associated to every Reidemeister and Morse move, and $\CKH(\S)$ will be the composition of these maps.

\subsubsection*{Morse moves}

There are three types of Morse moves: \emph{births}, \emph{deaths}, and \emph{saddles}. A birth~{\birth} adds a small new loop to the diagram, a death {\death} removes a small loop, and a saddle {\saddle} merges or splits loops, by changing {\zerosmoothing} to {\onesmoothing} locally.

Let $X_i$ be an enumeration of the crossings of $D_i$. If $D_{i+1}$ is obtained from $D_i$ by a Morse move, then the list of crossings does not change. We let $X_{i+1} = X_i$, and we define the chain map $\CKH(D_i, X_i)\to \CKH(D_{i+1}, X_{i+1})$ on labelled smoothings according to \cref{table_morse}.

\subsubsection*{Reidemeister moves}

There are three types of Reidemeister moves: R1 removes or adds 1 crossing, R2 removes or adds 2 crossings, and R3 manipulates the diagram around 3 crossings. We define the Reidemeister induced chain maps using the Morse induced chain maps defined previously. In addition, we will use {\kdot} to denote two saddle moves on an arc (one splitting, then one re-merging). Then $\frac12$~{\kdot} sends a 1-labelled loop to an $x$-labelled loop, and an $x$-labelled loop to 0.

Let $X_i$ be an enumeration of the crossings of $D_i$. If $D_{i+1}$ is obtained from $D_i$ by an R1 move that adds a crossing $x$, then add $x$ to the enumeration in the last position: $X_{i+1} = (X_i, x)$. We define the chain map $\CKH(D_i, X_i)\to \CKH(D_{i+1}, X_{i+1})$ on labelled smoothings according to \cref{table_r1}. If the R1 move removes a crossing $x$, then consider first an enumeration $X_i'$ that has $x$ as its last element, and apply the isomorphism $\CKH(D_i, X_i)\to \CKH(D_i, X_i')$ described in \cref{sec:crossing-order-change}. Then we define $\CKH(D_i, X_i')\to \CKH(D_{i+1}, X_{i+1})$ according to the same Table, where $X_i' = (X_{i+1}, x)$.

If $D_{i+1}$ is obtained from $D_i$ by an R2 move that adds the crossings $x_1,x_2$, then let $X_{i+1} = (X_i, x_1, x_2)$, and define the chain map $\CKH(D_i, X_i)\to \CKH(D_{i+1}, X_{i+1})$ on labelled smoothings according to \cref{table_r2}. If the R2 move removes two crossings $x_1,x_2$, then first apply an isomorphism $\CKH(D_i, X_i)\to \CKH(D_i, X_i')$ that moves $x_1,x_2$ to the last two positions in $X_i'$, then define $\CKH(D_i, X_i')\to \CKH(D_{i+1}, X_{i+1})$ according to the same Table, where $X_i' = (X_{i+1}, x_1, x_2)$.

\begin{table}
\begin{center}
\renewcommand{\arraystretch}{2.1}
\begin{tabular}{|c|c|c|}
\hline
Reidemeister move & Smoothing & Induced map \\
\hline
\multirow{2}{68pt}{\begin{minipage}{9em} \vspace{8pt} $\kropos \to \kroarc$ \end{minipage}} & 
\krosa & \kroma $\begin{array}{c}\hspace{-1em}\vspace{.425em}\end{array}$ \\
\cline{2-3} & \krosb & $\begin{array}{c} 0 \vspace{.425em} \end{array}$ \\
\hline
$\kroarc \to \kropos$ & \kroarc & $\begin{array}{c} \frac12 \bigg( \kromb \bigg) \vspace{.425em} \end{array}$ \\
\hline
\multirow{2}{68pt}{\begin{minipage}{9em} \vspace{10pt} $\kroneg \to \kroarc$ \end{minipage}} & 
\krosa & $\begin{array}{c} \frac12 \bigg( \kromc \bigg) \vspace{.425em} \end{array}$ \\
\cline{2-3} & \krosb & $\begin{array}{c} 0 \vspace{.425em} \end{array}$ \\
\hline
$\kroarc \to \kroneg$ & \kroarc & \kromd $\begin{array}{c}\hspace{-1em}\vspace{.425em}\end{array}$ \\
\hline
\end{tabular}
\renewcommand{\arraystretch}{1}
\end{center}
\caption{Chain maps induced by a Reidemeister 1 move.}
\label{table_r1}
\end{table}

\begin{table}
\begin{center}
\renewcommand{\arraystretch}{2.1}
\begin{tabular}{|c|c|c|}
\hline
Reidemeister move & Smoothing & Induced map \\
\hline
\multirow{2}{68pt}{\begin{minipage}{9em} \vspace{47pt} $\krtcr \to \krtcrl$ \end{minipage}} & 
\krtsa & $-$ \!\! \krtma $\begin{array}{c}\hspace{.25em}\vspace{.425em}\end{array}$ \\
\cline{2-3} & \krtsb & \krtcrl $\begin{array}{c}\hspace{-1em}\vspace{.425em}\end{array}$ \\
\cline{2-3} & \krtsc & $\begin{array}{c} 0 \vspace{.425em} \end{array}$ \\
\cline{2-3} & \krtsd & $\begin{array}{c} 0 \vspace{.425em} \end{array}$ \\
\hline
$\krtcrl \to \krtcr$ & \krtcrl & \krtmb $\begin{array}{c}\hspace{-1em}\vspace{.425em}\end{array}$ \\
\hline
\end{tabular}
\renewcommand{\arraystretch}{1}
\end{center}
\caption{Chain maps induced by a Reidemeister 2 move.}
\label{table_r2}
\end{table}

\begin{table}
\begin{center}
\begin{tabular}{c}
\includegraphics[scale=.57]{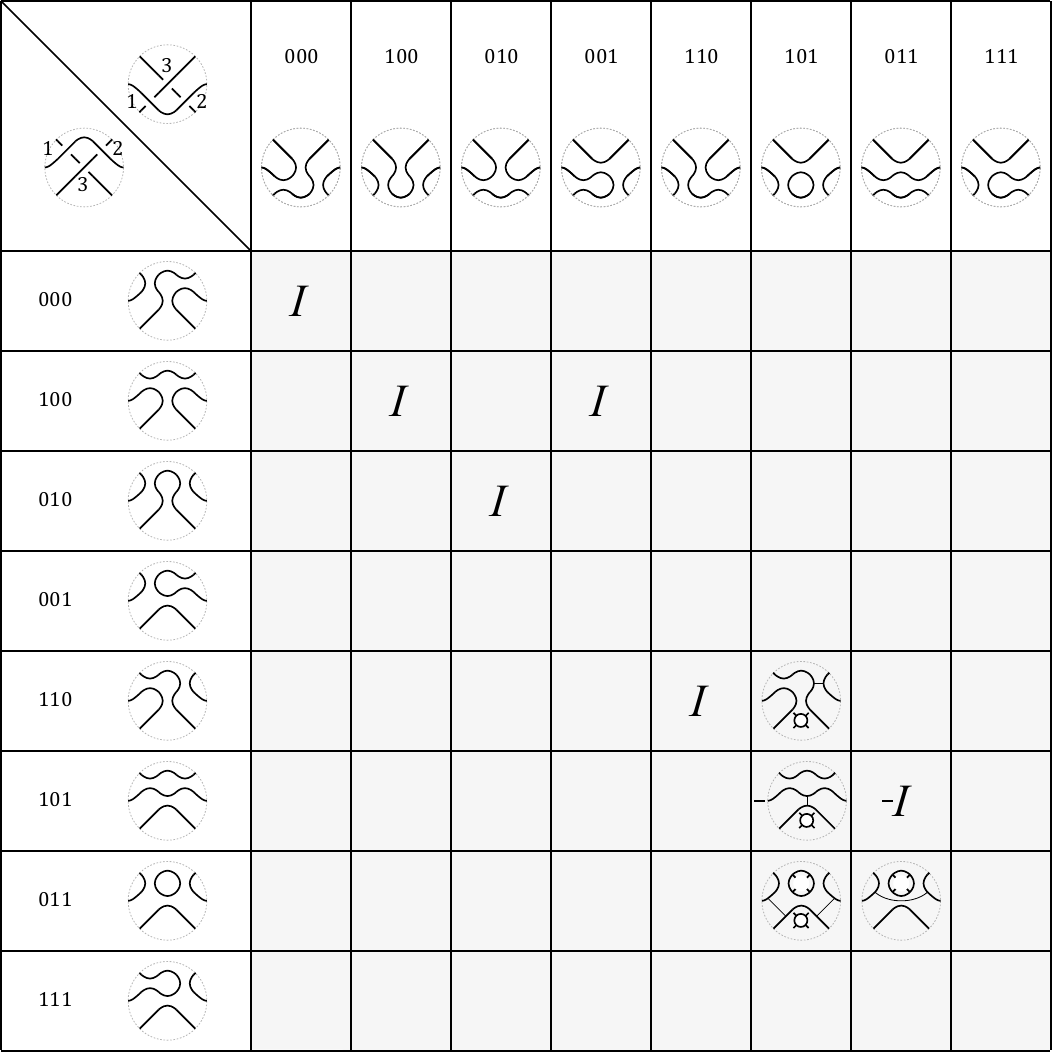} \\
\includegraphics[scale=.57]{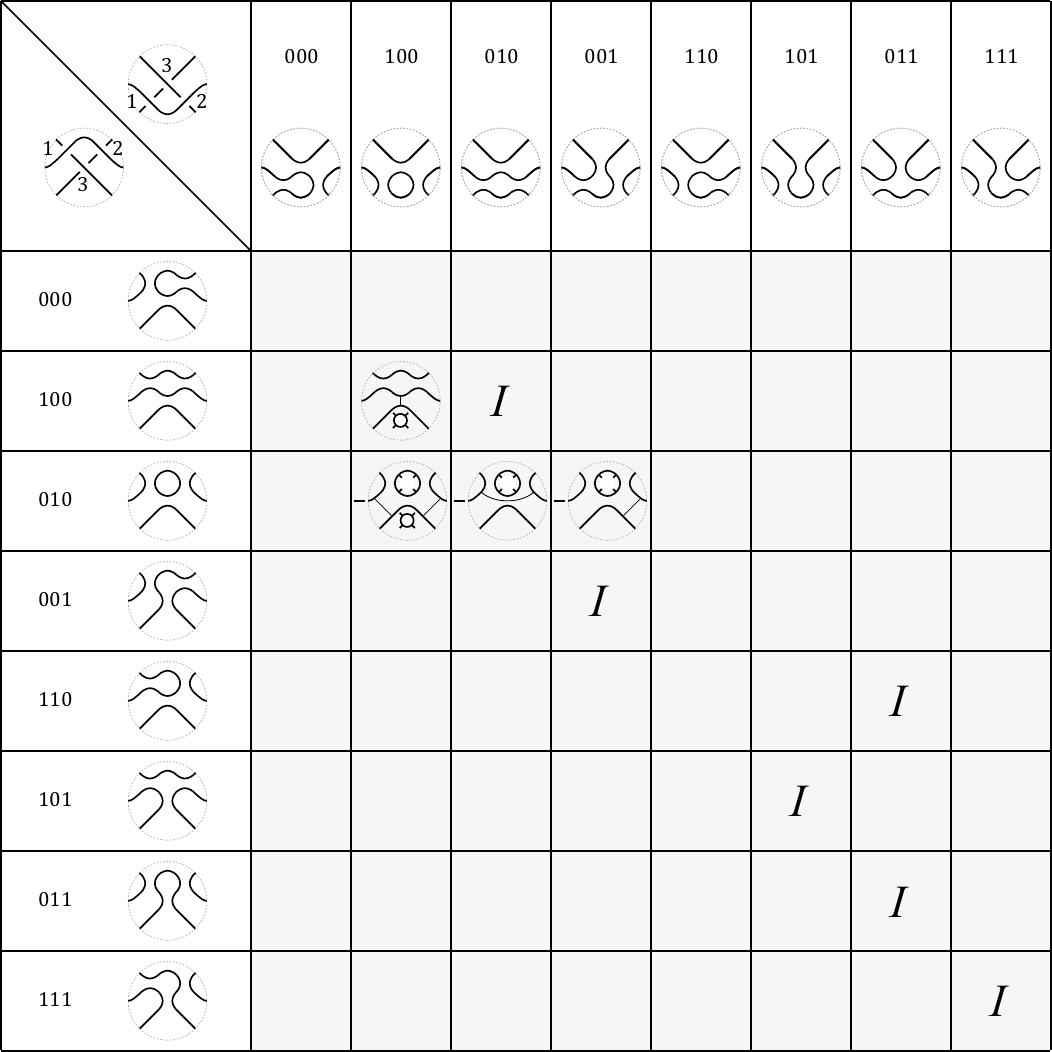}
\end{tabular}
\end{center}
\caption{Chain maps induced by a Reidemeister 3 move.}
\label{table_r3}
\end{table}

Finally, assume that $D_{i+1}$ is obtained from $D_i$ by an R3 move. The chain maps are presented in table form (\cref{table_r3}), interpreted as follows. (These tables are adapted from \cite{Hayden-Sundberg}, but with corrected signs.) The top-left cell of each table depicts the Reidemeister move, showing the starting diagram with crossings $x_1,x_2,x_3$ in the left corner, and the ending diagram with crossings $y_1,y_2,y_3$ in the right corner. As usual, we first apply an isomorphism $\CKH(D_i, X_i)\to \CKH(D_i, X_i')$, which reorders the crossings $x_1,x_2,x_3$ to the last three positions. Write $X_i'=(X,x_1,x_2,x_3)$, and let $X_{i+1}=(X,y_1,y_2,y_3)$. The map $\CKH(D_i, X_i')\to \CKH(D_{i+1}, X_{i+1})$ is then defined as follows. The left column of the table lists the smoothings of the starting diagram, while the top row lists the smoothings of the ending diagram. The image of a labelled smoothing is obtained by adding together the non-empty cells in its corresponding row, where $I$ denotes an isotopy of the labelled smoothing.

This completes the definition of the maps induced by elementary moves. Now, let $\S\colon L_0\to L_1$ be a cobordism with movie $D_0,\dots,D_N$, and let $X_0$ and $X_N'$ be enumerations of the crossings of $D_0$ and $D_N$, respectively. We obtain enumerations $X_1,\dots,X_N$ and maps $\phi_i\colon\CKH(D_i,X_i)\to\CKH(D_{i+1},X_{i+1})$ from the description above. We define
\[
\CKH(\S,X_0,X_N') = \Phi\circ\phi_{N-1}\circ\dots\circ\phi_0\colon \CKH(D_0,X_0)\to \CKH(D_N,X_N'),
\]
where $\Phi\colon\CKH(D_N,X_N)\to \CKH(D_N,X_N')$ is the isomorphism induced by reordering the crossings. It can be checked that any $\phi_i$ is a chain map. Therefore, $\CKH(\S,X_0,X_N')$ is also a chain map, which induces a map on homology: $\KH(\S)\colon \KH(L_0)\to \KH(L_1)$.

The power of using Khovanov homology for cobordisms comes from the fact that the map $\KH(\S)$ is invariant (up to a global sign) under smooth isotopy of $\S$ relative to the boundary \cite{Jacobsson}. In fact, the same holds if we only look at a diffeomorphism of the ambient space:

\begin{theorem}[\cite{Lipshitz-Sarkar}*{Proposition 3.7}]\label{thm:Jacobsson}
Let $\S\sub S^3\times[0,1]$ be a cobordism from $L_0$ to $L_1$. Then the map $\KH(\S)\colon \KH(L_0)\to\KH(L_1)$ is invariant up to multiplication by $\pm1$ under any diffeomorphism of $S^3\times[0,1]$ that restricts to the identity on the boundary.
\end{theorem}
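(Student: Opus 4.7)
The plan is to reduce the claim to Jacobsson's invariance theorem (cited just above): it suffices to show that $\S$ and $\phi(\S)$ are smoothly ambient isotopic in $S^3\times[0,1]$ rel boundary, since Jacobsson then yields $\KH(\phi(\S))=\pm\KH(\S)$. Given a diffeomorphism $\phi$ of $S^3\times[0,1]$ fixing the boundary pointwise, the natural way to produce such an isotopy is to exhibit a smooth path $\{\phi_t\}_{t\in[0,1]}$ in $\mathrm{Diff}(S^3\times[0,1],\partial)$ with $\phi_0=\mathrm{id}$ and $\phi_1=\phi$; applying this path to $\S$ gives an ambient isotopy $\phi_t(\S)$ from $\S$ to $\phi(\S)$ through surfaces with common boundary $L_0\times\{0\}\cup L_1\times\{1\}$, and Jacobsson's theorem supplies the conclusion.

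The substance of the argument is therefore showing that $\mathrm{Diff}(S^3\times[0,1],\partial)$ is path-connected. I would proceed in two steps. First, by a collaring argument, identify this group up to homotopy with a loop space of $\mathrm{Diff}(S^3)$: any diffeomorphism trivial on the boundary can be straightened near each boundary component so that the interior behavior is captured by a path in $\mathrm{Diff}(S^3)$ returning to $\mathrm{id}$. Second, invoke Hatcher's Smale conjecture, $\mathrm{Diff}(S^3)\simeq O(4)$, together with Cerf-type results on pseudo-isotopies, to analyse this loop space and show that every such $\phi$ is in fact smoothly connected to $\mathrm{id}$ rel boundary.

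The main obstacle is the delicate behaviour of diffeomorphism groups in dimension $4$: since $\pi_1\mathrm{Diff}(S^3)\cong\Z/2$, there is a potential $\Z/2$ obstruction to connecting $\phi$ to $\mathrm{id}$, and one must argue either that this class is killed by the embedding $\S\hookrightarrow S^3\times[0,1]$ (so that the induced ambient isotopy of $\S$ does exist), or that it contributes exactly the $\pm 1$ indeterminacy appearing in the statement. I would expect this sign to be tracked through the elementary chain maps of \cref{table_morse,table_r1,table_r2,table_r3} within the movie-move formalism underlying Jacobsson's theorem, and carrying out this sign accounting cleanly --- without reproving invariance from scratch --- is where I would expect the technical heart of the argument to lie.
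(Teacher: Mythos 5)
The paper itself does not prove this statement --- it cites \cite{Lipshitz-Sarkar}*{Proposition 3.7} --- so your proposal is best judged as a standalone argument, and there is a genuine gap in it.

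Your strategy reduces the theorem to Jacobsson's isotopy invariance by showing that $\mathrm{Diff}(S^3\times[0,1],\partial)$ is path-connected. That connectivity is not a known fact; it is an open problem in four-dimensional topology. Cerf's pseudo-isotopy theorem, which you invoke, applies to $M\times I$ only when $M$ is simply connected of dimension $\ge 5$, so it says nothing about $S^3\times I$. The relevant fibration sequence reads
\[
\pi_1\mathrm{Diff}(S^3)\longrightarrow \pi_0\mathrm{Diff}(S^3\times I,\partial)\longrightarrow \pi_0\mathcal{P}(S^3)\longrightarrow \pi_0\mathrm{Diff}(S^3),
\]
and although Hatcher gives $\pi_1\mathrm{Diff}(S^3)\cong\Z/2$, the middle term $\pi_0\mathcal{P}(S^3)$ (the group of pseudo-isotopies of $S^3$) is not known to vanish. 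Your framing of the obstruction as ``a potential $\Z/2$'' therefore understates the problem: the obstruction group is not even known to be finitely generated, and there is no available result that forces it to die on a given embedded surface. Gluing a $4$-ball to one end shows that a nontrivial class in $\pi_0\mathrm{Diff}(S^3\times I,\partial)$ would feed into $\pi_0\mathrm{Diff}(D^4,\partial)$, which is likewise open, so this is squarely in the territory of hard $4$-dimensional questions rather than something Hatcher plus Cerf settles.

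There is also a structural signal that the reduction cannot be what is intended: the paper explicitly presents the statement as a strengthening of Jacobsson's isotopy invariance (``In fact, the same holds if we only look at a diffeomorphism of the ambient space''), which would be vacuous if every boundary-fixing diffeomorphism of $S^3\times I$ were smoothly isotopic to the identity rel boundary. The $\pm1$ indeterminacy, moreover, is already present in Jacobsson's theorem itself (it comes from the movie-move analysis), not from some residual $\Z/2$ in the diffeomorphism group, so the last part of your plan is tracking the wrong source of the sign. A correct argument must sidestep $\pi_0\mathrm{Diff}(S^3\times I,\partial)$ entirely --- for instance by comparing, via a Cerf-theoretic one-parameter family, the movies of $\phi(\S)$ induced by the standard projection and by its pullback under $\phi$, rather than by trying to isotope $\phi$ to the identity.
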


As often is the case, if $\S\sub D^4$ is a slice surface bounded by the link $L\sub S^3$, then $\S$ can be viewed as a cobordism $\S\colon L\to \emptyset$, or as $\S^\star\colon \emptyset \to L$. Thus, to distinguish the surfaces $\S_0,\S_1\sub D^4$ smoothly, it suffices to distinguish the maps $\KH(\S_i)$, or the maps $\KH(\S_i^\star)$. Recent examples in the literature have demonstrated the use of this method to distinguish surfaces smoothly or exotically \cites{Sundberg-Swann,Hayden-Sundberg,HKMPS,Hayden-atomic}.

If $\S\colon \emptyset\to L$ is a cobordism, then the map $\KH(\S)\colon \KH(\emptyset)\to\KH(L)$ is entirely determined by the image of $1\in\Z=\KH(\emptyset)$. We call this homology class $\KH(\S)(1)$ the \emph{Khovanov--Jacobsson class} of $\S$.

To determine whether two Khovanov--Jacobsson classes are the same up to sign, part~(b) of the following well-known statement often provides a simple diagrammatic proof that a chain element is non-zero in homology.

\begin{prop}\label{prop:outside}
Let $\alpha\in\CKH(D, X)$ be a chain element, given as a linear combination $\alpha=n_1\alpha_1+\dots+n_s\alpha_s$ of distinct labelled smoothings $\alpha_i$, $0\ne n_i\in\Z$.
\begin{myenumerate}
\item If, for every $\alpha_i$, each 0-smoothed crossing in $\alpha_i$ connects two distinct $x$-labelled loops, then $d\alpha=0$, i.e. $\alpha$ is a cycle.
\item If there exists an $\alpha_i$ where each 1-smoothed crossing in $\alpha_i$ connects two distinct 1-labelled loops, then $\alpha\ne d\beta$ for any $\beta$, i.e. $\alpha$ is not a boundary.
\end{myenumerate}
\end{prop}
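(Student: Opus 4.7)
The plan is to prove both parts by direct case analysis against the saddle entries of \cref{table_morse}. For (a), I would compute $d\alpha = \sum_i n_i\, d\alpha_i$ and show that in fact each $d\alpha_i$ already vanishes. The differential $d\alpha_i$ is a signed sum of $m_j(\alpha_i)$ over the 0-smoothed crossings $x_j$ of $\alpha_i$. Since each such $x_j$ connects two \emph{distinct} loops, toggling $x_j$ from a 0-smoothing to a 1-smoothing is locally a merge of two loops into one, and by hypothesis both of these loops carry the label $x$. The relevant saddle entry $\kcobdxx \mapsto 0$ then gives $m_j(\alpha_i) = 0$, and hence $d\alpha = 0$.

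For (b), I would argue by contradiction. Suppose $\alpha = d\beta$ with $\beta = \sum_k n'_k \beta_k$ a linear combination of distinct labelled smoothings, and fix the particular $\alpha_i$ satisfying the hypothesis. The coefficient of $\alpha_i$ in $d\beta$ equals $\sum_k n'_k c_k$, where $c_k$ is the coefficient of $\alpha_i$ in $d\beta_k$, and the strategy is to show $c_k = 0$ for every $k$, so that this sum cannot possibly be the required $n_i \neq 0$. Any labelled smoothing appearing in $m_j(\beta_k)$ differs from $\beta_k$ only at position $j$, which flips from $0$ to $1$; so if $\alpha_i$ appeared in some $m_j(\beta_k)$, then $\beta_k$ would agree with $\alpha_i$ everywhere except at one crossing $x_j$, where $\alpha_i$ has a 1-smoothing and $\beta_k$ has a 0-smoothing. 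By the hypothesis on $\alpha_i$, the 1-smoothing at $x_j$ separates two distinct 1-labelled loops, so undoing it to get $\beta_k$ glues these two loops into a single loop, and applying $m_j$ to $\beta_k$ splits that loop back into two. The splitting entries $\kcobdo \mapsto \kcobiox + \kcobixo$ and $\kcobdx \mapsto \kcobixx$ produce label pairs $(1,x)$, $(x,1)$, or $(x,x)$ on the two new loops, never $(1,1)$, so $\alpha_i$ cannot appear in $m_j(\beta_k)$, contradicting the assumption that it appears in $d\beta$.

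The only subtle point is the identification of the 0/1-smoothing toggle at $x_j$ with a merge versus a split of loops, which relies on the word \emph{distinct} in both hypotheses: a crossing joining a loop to itself would behave in the opposite way (merge becomes split and vice versa), and then the corresponding saddle entries $\kcobdo \mapsto \kcobiox + \kcobixo$ and $\kcobdxx \mapsto 0$ would no longer give what we want. Beyond this, the argument is essentially a table lookup, so I do not expect a real obstacle.
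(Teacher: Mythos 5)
Your proof is correct and takes the same route the paper indicates: the paper dismisses the statement as ``straightforward from the definition of $d$ and the Morse saddle induced chain maps (\cref{table_morse}),'' and your case analysis (merges of two $x$-labelled loops giving $0$ for part (a); splits never producing the label pair $(1,1)$ for part (b)) is exactly the unpacking of that one-line remark.
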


\begin{proof}
This is straightforward from the definition of $d$ and the Morse saddle induced chain maps (\cref{table_morse}).
\end{proof}

		\subsection{Mirroring}\label{sec:mirroring}

Let $-L$ denote the mirror of the link $L$, and $-D$ the mirror of the diagram $D$, obtained by swapping overstrands and understrands at every crossing. Then there is an isomorphism of chain complexes $\CKH(-D,X)\cong \CKH(D,X)^\star$, where $C^\star = \Hom(C,\Z)$ is the dual of the $\Z$-module $C$ \cite{Khovanov}*{Proposition 32}. This isomorphism $\Psi_D$ is given by $\alpha_\sigma\mapsto (\beta_{-\sigma})^\star$, where $(-\sigma)_i = 1-\sigma_i$ is the same smoothing on the mirrored diagram, $\beta_{-\sigma}$ is obtained from $\alpha_\sigma$ by swapping 1-labels and $x$-labels, and for a $\Z$-module $C$ freely generated by $c, c_1, c_2,\ldots\in C$, the homomorphism $c\mapsto 1, c_i\mapsto 0$ is denoted by $c^\star\in C^\star$.

If $f\colon C\to B$ is a map, then the dual $f^\star\colon B^\star\to C^\star$ is defined by $\phi\mapsto \phi\circ f$ for $\phi\in B^\star$.
Then it is straightforward to check that $\Psi_D\circ d_{-D}=d_D^\star\circ\Psi_D$, showing the isomorphism of chain complexes $(\CKH(-D,X), d_{-D})$ and $(\CKH(D,X)^\star, d_D^\star)$. Taking homology, this shows that $\KH(-L)$ is related to $\KH(L)$ in the usual way that homology is related to cohomology. If $\Tor$ denotes the torsion subgroup, this means that, since $\Psi$ multiplies both gradings by $-1$,
\begin{align*}
\KH^{h,q}(-L) \otimes \Q &\cong \KH^{-h,-q}(L) \otimes \Q, \\
\Tor(\KH^{h,q}(-L))      &\cong \Tor(\KH^{1-h,-q}(L)).
\end{align*}

Let $\S\colon L_0\to L_1$ be a cobordism. The reverse of the cobordism is $\S^\star\colon L_1\to L_0$, and the mirror of the cobordism is $-\S\colon {-L_0}\to {-L_1}$. Then the cobordism maps induced on Khovanov homology are compatible with the previous isomorphism. Specifically, the following diagram commutes up to chain homotopy\footnote{It appears that a proof of this statement does not appear in the literature. It is stated in \cites{Hayden-lecture, Sundberg-PhD}, but without a proper reference.}:
\[
\begin{tikzcd}
\CKH(-D_0, X_0) \arrow[rr, "\Psi_{D_0}"', "\cong"] \arrow[d, "\CKH(-\S)"'] &  & \CKH(D_0, X_0)^\star \arrow[d, "\CKH(\S^\star)^\star"] \\
\CKH(-D_1, X_1) \arrow[rr, "\Psi_{D_1}"', "\cong"]                        &  & \CKH(D_1, X_1)^\star                                 
\end{tikzcd}
\]

	\section{The computer program}\label{sec:program}

We have developed a Python module that can calculate the map $\KH(\S)$ for any cobordism~$\S$ (given as a movie). This addresses an important gap in the existing computational tools for Khovanov homology. The source code is available in the file \mintinline{text}{khovanov.py} on GitHub \cite{github}, and is also included as an ancillary file in the arXiv version of this paper.

As a basic example, we use the following code to show that the two annuli $A_0,A_1$, into which the torus link $T_{4,6}$ divides the torus, are not smoothly isotopic relative to the boundary in $D^4$ \cite{Feher-pqkn}*{Theorem 1}.

\medskip
\begin{minted}[linenos]{python}
from khovanov import *

L = Link(braid_closure = [1, 2, 3] * 6)
c, d = L.crossings[0], L.crossings[3]
L.orient((c, 0), (c, 1))

A0 = Cobordism(L)
A0.morse_saddle((c, 0), (c, 3))
A0.finish()

A1 = Cobordism(L)
A1.morse_saddle((d, 0), (d, 3))
A1.finish()

compare(A0.KJ_class(), A1.KJ_class())
\end{minted}
\medskip

Lines 3--5 define the link $L=T_{4,6}$ using a braid closure, denote two of its crossings $c$ and $d$, and define its orientation (needed only when the link has more than one component). Lines 7--9 define the annulus $A_0$ combinatorially, viewed as a cobordism from $L$ to $\emptyset$. Its movie starts with a Morse saddle move between two strands of the diagram that are identified by the 0th and 3rd strands around the crossing $c$. The next moves of the movie would then be added as subsequent function calls (e.g. \code{A0.morse_birth()}, \code{A0.reidemeister_2((d, 3), (d, 0))}, \code{A0.reidemeister_1_up((c, 2))}). In this example, however, the resulting link after the first move is the unknot, so we use the convenient \code{finish()} function to automatically finish the rest of the cobordism. Lines 11--13 define the cobordism $A_1$ similarly, starting with a different Morse saddle move. In line~15, we calculate chain representatives of the Khovanov--Jacobsson classes of $A_0^\star, A_1^\star$, and compare them in homology. In this case, the program outputs that both their sum and difference are trivially outside the image of the differential map $d$ (\cref{prop:outside} (b)), giving a proof that $A_0, A_1$ are not smoothly isotopic in $D^4$ by \cref{thm:Jacobsson}.

		\subsection{Classes and methods}

We provide a brief overview of the program's classes, highlighting key properties and methods that might be useful to users. The program builds on SnapPy's \code{Link} class \cite{SnapPy}, with modifications. This class represents a link diagram combinatorially as a graph in which each vertex has degree 4 or 2.

		\subsection*{Crossing}
A \code{Crossing} represents a crossing in the link diagram, modelled as a degree-4 vertex. The strands are labelled 0, 1, 2, and 3 in positive orientation around the vertex, with 0 and 2 corresponding to the understrand. Its properties:
\begin{itemize}
\item \code{adjacent} specifies the connection for each strand.
\item \code{label} is used to identify a crossing within a \code{Link} (usually an integer).
\end{itemize}
For example, \code{a = Crossing(7)} creates a \code{Crossing} object with \code{label} 7, and connecting the strand \code{(a, 3)} to \code{(b, 0)} can be achieved by writing \code{a[3] = b[0]}.

		\subsection*{Strand}
A \code{Strand} is similar to a \code{Crossing}, but represents a degree-2 vertex. While SnapPy automatically fuses these and removes unknotted unlinked components, we retain them as they are essential for representing cobordism movies.

		\subsection*{Link}
A \code{Link} represents a link diagram with an enumeration of its crossings. Its property:
\begin{itemize}
\item \code{crossings} provides a list of \code{Crossing} and \code{Strand} objects that form the vertices of the graph.
\end{itemize}
Links can be created using PD codes, braid closures, or a list of \code{Crossing} and \code{Strand} objects, e.g. \code{L = Link([a, b, c])}. Its methods:
\begin{itemize}
\item \code{L.orient(cs0, cs1, ...)}: if the link $L$ has multiple components, it may be necessary to define its orientation to set its \code{n_plus} and \code{n_minus} properties (representing the number of positive and negative crossings).
\item \code{L.differential_matrix(h, q)} returns a matrix of the differential map $d$ in grading $(h,q)\to (h+1,q)$.
\item \code{L.homology_with_generators(h, q)} calculates the Khovanov homology $\KH^{h,q}(L)$, expressing it as a direct sum of cyclic groups and returning chain representatives for the generators of each group (this method needs Sage \cite{sage}).
\end{itemize}
For computing Khovanov homology without generators, significantly faster programs exist, such as KnotJob \cite{knotjob}, which implements Bar-Natan's efficient algorithm \cite{Bar-Natan-algorithm}.

		\subsection*{CrossingStrand}
A \code{CrossingStrand} represents one of the four strands around a \code{Crossing} (or one of the two strands around a \code{Strand}). Its properties:
\begin{itemize}
\item \code{crossing} references the associated \code{Crossing} or \code{Strand} object.
\item \code{strand_index} indicates the strand's position, e.g. 0, 1, 2, or 3.
\end{itemize}
For example, \code{cs = CrossingStrand(a, 2)} sets \code{cs.crossing} to the object \code{a}, and \code{cs.strand_index} to 2.

\code{CrossingStrand} objects are crucial for defining cobordisms, as they allow precise specification of strands involved in an elementary move. Several methods are available to navigate within a \code{Link}:
\begin{itemize}
\item \code{cs + 1}, \code{cs.next()}, and \code{cs - 1} give the adjacent strands around the same crossing.
\item \code{cs.opposite()} returns the strand located at the opposite end of the edge connected to \code{cs}.
\end{itemize}
It is important to understand that while \code{cs} and \code{cs.opposite()} lie on the same edge of the underlying graph of the link diagram, they represent distinct portions of the edge. Care must be taken to use the appropriate one when specifying strands for defining an elementary move.

		\subsection*{SmoothLink}
A \code{SmoothLink} represents a smoothing of a link diagram. Its properties:
\begin{itemize}
\item \code{link} refers to the underlying \code{Link}.
\item \code{smoothing} is a dictionary mapping each \code{Crossing} to 0 or 1.
\item \code{loops} is a list of loops, where each loop is represented as a tuple of \code{CrossingStrand} objects (two at each crossing) traversed along the loop.
\end{itemize}

		\subsection*{LabelledSmoothing}
A \code{LabelledSmoothing} represents a labelled smoothing with a coefficient. Its properties:
\begin{itemize}
\item \code{smooth_link} refers to the underlying \code{SmoothLink}.
\item \code{labels} is a dictionary mapping each loop to \code{"1"} or \code{"x"}.
\item \code{coefficient} is an integer that will represent the coefficient in a linear combination.
\end{itemize}
There are several methods for examining the relationship between a \code{LabelledSmoothing} \code{LS} and the image of the differential map (without computing the entire differential matrix):
\begin{itemize}
\item \code{LS.differential()} returns its differential as a list of \code{LabelledSmoothing} objects, interpreted as a linear combination.
\item \code{LS.is_outside()} checks if every 1-smoothed crossing connects two distinct 1-labelled loops. If so, then \code{LS} lies outside the image of the differential by \cref{prop:outside} (b).
\item \code{LS.row_size()} and \code{LS.reverse_differential()} give how many and which elements contain \code{LS} in their differential.
\end{itemize}

		\subsection*{CKhElement}
A \code{CKhElement} represents a chain element of $\CKH(D,X)$. It is a list of \code{LabelledSmoothing} objects, interpreted as a linear combination using their \code{coefficient} properties.

Each of the elementary moves can be applied to a \code{CKhElement} \code{CKH} by calling the corresponding method and specifying the location with appropriate \code{CrossingStrand} objects (see the source code for more details and pictures):

\begin{itemize}
\item \code{CKH.morse_birth()} adds a new loop.
\item \code{CKH.morse_death(cs)} removes a loop containing \code{cs}.
\item \code{CKH.morse_saddle(cs0, cs1)} adds a saddle move to connect \code{cs0} with \code{cs1}.
\item \code{CKH.reidemeister_1(cs)} removes a twist containing \code{cs}.
\item \code{CKH.reidemeister_1_up(cs, True)} adds a positive twist to the left of \code{cs}.
\item \code{CKH.reidemeister_2(cs0, cs1)} removes the common crossing of \code{cs0} and \code{cs1}, and the common crossing of \code{cs0.opposite()} and \code{cs1.opposite()}.
\item \code{CKH.reidemeister_2_up(cs0, cs1)} moves \code{cs0} to the right and over \code{cs1}.
\item \code{CKH.reidemeister_3(cs0, cs1, cs2)} moves \code{cs0} over \code{cs1}, \code{cs2}, and the common crossing of \code{cs1.opposite()} and \code{cs2.opposite()}.
\end{itemize}
These change the underlying \code{LabelledSmoothing}, \code{SmoothLink}, and \code{Link} objects as well by the elementary move. Further methods:
\begin{itemize}
\item \code{CKH.fuse(strand)} fuses a \code{Strand}, combining the two edges from it.
\item \code{CKH.differential()} returns the differential.
\item \code{CKH.reorder_crossings(new_order)} reorders the crossings (see \cref{sec:crossing-order-change}).
\item \code{CKH.replace_link(link, flipping)} might be useful if \code{CKH} is on a different copy of the same link.
\item \code{CKH.simplify()} simplifies the linear combination by combining equal terms and removing 0-coefficient elements.
\item \code{CKH0 + CKH1} and \code{CKH0 - CKH1} calculate the sum and difference of two chain elements and simplify the result.
\item \code{compare(CKH0, CKH1)} returns whether two chain elements are the same up to sign in homology.
\end{itemize}

		\subsection*{Cobordism}
A \code{Cobordism} represents a movie of a cobordism between two links. Its properties:
\begin{itemize}
\item \code{movie} contains a list of elementary moves and their locations.
\item \code{links} gives the list of links corresponding to each stage of the movie.
\end{itemize}
A new cobordism $C\colon L\to L$ with empty movie can be created by \code{C = Cobordism(L)}, and moves can be added by subsequent calls of methods. Crossing labels can also be used in arguments instead of \code{CrossingStrand} objects, e.g. \code{C.morse_death((7, 1))}. In addition to the above methods of \code{CKhElement}, there are two convenient methods for adding multiple elementary moves at once: \code{band_move(...)} and \code{finish()}.

\begin{itemize}
\item \code{C.band_move(n, cs0, (cs1, True), (cs2, False), ..., csk)} adds a band with $n$ half-twists, starting at \code{cs0}, passing over \code{cs1}, under \code{cs2}, and so on, before ending at \code{csk}.
\item \code{C.finish()} tries to simplify the last link using Reidemeister~1--3 moves (similar to SnapPy's \code{simplify("level")} algorithm), then does a Morse death on each crossingless component.
\item \code{C.reverse()} returns the reversal of the cobordism.
\item \code{C.mirror()} returns the mirror of the cobordism.
\item \code{C.chi()} returns the Euler characteristic of the cobordism.
\item \code{C.map(CKH)} applies the map induced by the cobordism to \code{CKH} and changes it in place.
\item \code{C.KJ_class()} returns the Khovanov--Jacobsson class of \code{C} or \code{C.reverse()}, if one of the ends of \code{C} is the empty link.
\item \code{C.matrix(h, q)} calculates the cobordism map induced on homology in grading $(h,q)\to (h,q+\chi(C))$ in matrix form (this method needs Sage).
\end{itemize}

\medskip
All classes support the standard \code{print(...)} method to display information about an object. For instance, \code{print(L)} outputs the adjacency structure of a \code{Link}, \code{print(C)} displays the sequence of moves in a \code{Cobordism} movie, and \code{print(CKH)} represents a \code{CKhElement} by showing the labels of each loop in every element of the linear combination. Additionally, \code{CKH.print_short()} provides a concise representation that omits printing the loops.

	\section{Applications}\label{sec:applications}

As two applications of our program, we compute cobordism maps for Seifert surfaces and ribbon disks bounded by small knots.

		\subsection{Seifert surfaces up to 10 crossings}

For a prime knot $L$ with at most 10 crossings, there is a unique incompressible Seifert surface $\S$ up to smooth isotopy in $D^4$ (see \cite{Kakimizu} for the classification up to isotopy in $S^3$). Viewing $\S$ as a cobordism $L\to\emptyset$, the induced map on Khovanov homology is $\KH(\S)\colon\KH(L)\to \KH(\emptyset)$. As $\KH(\emptyset)\cong\Z$ is supported in grading $(0, 0)$, and $\KH(\S)$ shifts grading by $(0,\chi)$, where $\chi=\chi(\S)$, the only non-zero part of this cobordism map is $\KH^{0,-\chi}(L)\to \KH^{0,0}(\emptyset)$. For all knots $L$ considered, $\KH^{0,-\chi}(L) \cong \Z^k$ is torsion-free. When $k\ge 1$, this map is represented by a $1\times k$ matrix. As the isomorphism with $\Z^k$ is not canonical, the only information contained in this matrix is the greatest common divisor of its entries. We now calculate this non-negative integer for all such knots $L$ and their mirror.

If $D$ is an alternating diagram of $L$, then Seifert's algorithm produces a minimal genus Seifert surface for $L$ \cite{Gabai}. This algorithm constructs the surface from the orientation-preserving smoothing $\sigma$ by attaching a half-twisted band at each crossing to $D_\sigma$. This method provides a straightforward way to algorithmically generate incompressible Seifert surfaces for most prime knots with at most 10 crossings.

{
\setlength{\tabcolsep}{4pt}
\begin{table}
\begin{center}
\begin{small}
\begin{tabular}{lll|lll|lll|lll|lll}
$L$ & $\S$ & \hspace{-0.7em} $-\S$ \hspace{-1em}
\\ \hline
$3_1$ & 1 &  &        $9_{16}$ & 1 &  &         $10_{17}$ &  &  &           $10_{67}$ & 2\;0 &  &     $10_{117}$ &  &  \\
$4_1$ & 2 & 2 &       $9_{17}$ &  &  &          $10_{18}$ & 2\;0\;0\;0 &  & $10_{68}$ &  &  &         $10_{118}$ &  &  \\
$5_1$ & 1 &  &        $9_{18}$ & 1 &  &         $10_{19}$ &  &  &           $10_{69}$ &  &  &         $10_{119}$ &  &  \\
$5_2$ & 1 &  &        $9_{19}$ &  &  &          $10_{20}$ & 2 &  &          $10_{70}$ &  &  &         $10_{120}$ & 1 &  \\
$6_1$ & 2 & 2\;0 &    $9_{20}$ & 2\;0 &  &      $10_{21}$ & 2\;0 &  &       $10_{71}$ &  &  &         $10_{121}$ &  &  \\
$6_2$ & 2 &  &        $9_{21}$ &  & 2\;0\;0 &   $10_{22}$ &  &  &           $10_{72}$ & 2\;0 &  &     $10_{122}$ &  &  \\
$6_3$ &  &  &         $9_{22}$ &  &  &          $10_{23}$ &  &  &           $10_{73}$ &  &  &         $10_{123}$ &  &  \\
$7_1$ & 1 &  &        $9_{23}$ & 1 &  &         $10_{24}$ & 2\;0 &  &       $10_{74}$ & 2\;0\;0 &  &  $10_{124}$ & 1 &  \\
$7_2$ & 1 &  &        $9_{24}$ &  &  &          $10_{25}$ & 2\;0 &  &       $10_{75}$ &  &  &         $10_{125}$ &  &  \\
$7_3$ & 1 &  &        $9_{25}$ & 2\;0 &  &      $10_{26}$ &  &  &           $10_{76}$ & 2 &  &        $10_{126}$ &  &  \\
$7_4$ & 1 &  &        $9_{26}$ &  &  &          $10_{27}$ &  &  &           $10_{77}$ &  &  &         $10_{127}$ & 2 &  \\
$7_5$ & 1 &  &        $9_{27}$ &  &  &          $10_{28}$ &  &  &           $10_{78}$ & 2\;0\;0 &  &  $10_{128}$ & 1 &  \\
$7_6$ & 2\;0 &  &     $9_{28}$ &  &  &          $10_{29}$ &  &  &           $10_{79}$ &  &  &         $10_{129}$ &  &  \\
$7_7$ &  &  &         $9_{29}$ &  &  &          $10_{30}$ & 2\;0\;0 &  &    $10_{80}$ & 1 &  &        $10_{130}$ &  &  \\
$8_1$ & 2 & 2\;0 &    $9_{30}$ &  &  &          $10_{31}$ &  &  &           $10_{81}$ &  &  &         $10_{131}$ & 2 &  \\
$8_2$ & 2 &  &        $9_{31}$ &  &  &          $10_{32}$ &  &  &           $10_{82}$ &  &  &         $10_{132}$ &  & 2 \\
$8_3$ & 2\;0 & 2\;0 & $9_{32}$ &  &  &          $10_{33}$ &  &  &           $10_{83}$ &  &  &         $10_{133}$ & 2 &  \\
$8_4$ &  & 2\;0 &     $9_{33}$ &  &  &          $10_{34}$ &  &  &           $10_{84}$ &  &  &         $10_{134}$ & 1 &  \\
$8_5$ & 2 &  &        $9_{34}$ &  &  &          $10_{35}$ &  &  &           $10_{85}$ &  &  &         $10_{135}$ &  &  \\
$8_6$ & 2 &  &        $9_{35}$ & 1 &  &         $10_{36}$ & 2\;0 &  &       $10_{86}$ &  &  &         $10_{136}$ &  & 0\;0 \\
$8_7$ &  &  &         $9_{36}$ &  & 2\;0 &      $10_{37}$ &  &  &           $10_{87}$ &  &  &         $10_{137}$ &  &  \\
$8_8$ &  &  &         $9_{37}$ &  &  &          $10_{38}$ & 2\;0 &  &       $10_{88}$ &  &  &         $10_{138}$ &  &  \\
$8_9$ &  &  &         $9_{38}$ & 1 &  &         $10_{39}$ & 2\;0 &  &       $10_{89}$ &  &  &         $10_{139}$ & 1 &  \\
$8_{10}$ &  &  &      $9_{39}$ &  & 2\;0\;0 &   $10_{40}$ &  &  &           $10_{90}$ &  &  &         $10_{140}$ &  &  \\
$8_{11}$ & 2\;0 &  &  $9_{40}$ &  &  &          $10_{41}$ &  &  &           $10_{91}$ &  &  &         $10_{141}$ &  &  \\
$8_{12}$ &  &  &      $9_{41}$ &  &  &          $10_{42}$ &  &  &           $10_{92}$ & 2\;0\;0 &  &  $10_{142}$ & 1 &  \\
$8_{13}$ &  &  &      $9_{42}$ &  & 0 &         $10_{43}$ &  &  &           $10_{93}$ &  &  &         $10_{143}$ &  &  \\
$8_{14}$ & 2\;0 &  &  $9_{43}$ & 2 &  &         $10_{44}$ &  &  &           $10_{94}$ &  &  &         $10_{144}$ & 2\;0 &  \\
$8_{15}$ & 1 &  &     $9_{44}$ &  &  &          $10_{45}$ &  &  &           $10_{95}$ &  &  &         $10_{145}$ &  & 1 \\
$8_{16}$ &  &  &      $9_{45}$ &  & 2 &         $10_{46}$ & 2 &  &          $10_{96}$ &  &  &         $10_{146}$ &  &  \\
$8_{17}$ &  &  &      $9_{46}$ & 2\;0 & 2 &     $10_{47}$ &  &  &           $10_{97}$ & 2\;0\;0 &  &  $10_{147}$ & 2\;0\;0 &  \\
$8_{18}$ &  &  &      $9_{47}$ &  &  &          $10_{48}$ &  &  &           $10_{98}$ & 2\;0\;0 &  &  $10_{148}$ &  &  \\
$8_{19}$ & 1 &  &     $9_{48}$ &  & 2\;0\;0 &   $10_{49}$ & 1 &  &          $10_{99}$ &  &  &         $10_{149}$ & 2 &  \\
$8_{20}$ &  &  &      $9_{49}$ & 1 &  &         $10_{50}$ & 2\;0 &  &       $10_{100}$ &  &  &        $10_{150}$ & 2\;0 &  \\
$8_{21}$ & 2 &  &     $10_1$ & 2 & 2\;0 &       $10_{51}$ &  &  &           $10_{101}$ & 1 &  &       $10_{151}$ &  &  \\
$9_1$ & 1 &  &        $10_2$ & 2 &  &           $10_{52}$ &  &  &           $10_{102}$ &  &  &        $10_{152}$ & 1 &  \\
$9_2$ & 1 &  &        $10_3$ & 2\;0 & 2\;0\;0 & $10_{53}$ & 1 &  &          $10_{103}$ &  &  &        $10_{153}$ &  &  \\
$9_3$ & 1 &  &        $10_4$ &  & 2\;0\;0 &     $10_{54}$ &  &  &           $10_{104}$ &  &  &        $10_{154}$ & 1 &  \\
$9_4$ & 1 &  &        $10_5$ &  &  &            $10_{55}$ & 1 &  &          $10_{105}$ &  &  &        $10_{155}$ &  &  \\
$9_5$ & 1 &  &        $10_6$ & 2 &  &           $10_{56}$ & 2\;0 &  &       $10_{106}$ &  &  &        $10_{156}$ &  &  \\
$9_6$ & 1 &  &        $10_7$ & 2\;0 &  &        $10_{57}$ &  &  &           $10_{107}$ &  &  &        $10_{157}$ &  & 2 \\
$9_7$ & 1 &  &        $10_8$ & 2\;0 &  &        $10_{58}$ &  &  &           $10_{108}$ &  &  &        $10_{158}$ &  &  \\
$9_8$ & 2\;0\;0 &  &  $10_9$ &  &  &            $10_{59}$ &  &  &           $10_{109}$ &  &  &        $10_{159}$ &  &  \\
$9_9$ & 1 &  &        $10_{10}$ &  &  &         $10_{60}$ &  &  &           $10_{110}$ &  &  &        $10_{160}$ & 2\;0 &  \\
$9_{10}$ & 1 &  &     $10_{11}$ & 2\;0\;0 &  &  $10_{61}$ & 2\;0 &  &       $10_{111}$ & 2\;0\;0 &  & $10_{161}$ & 1 &  \\
$9_{11}$ &  & 2\;0 &  $10_{12}$ &  &  &         $10_{62}$ &  &  &           $10_{112}$ &  &  &        $10_{162}$ &  & 2\;0 \\
$9_{12}$ & 2\;0 &  &  $10_{13}$ &  &  &         $10_{63}$ & 1 &  &          $10_{113}$ &  &  &        $10_{163}$ &  &  \\
$9_{13}$ & 1 &  &     $10_{14}$ & 2\;0 &  &     $10_{64}$ &  &  &           $10_{114}$ &  &  &        $10_{164}$ &  &  \\
$9_{14}$ &  &  &      $10_{15}$ &  &  &         $10_{65}$ &  &  &           $10_{115}$ &  &  &        $10_{165}$ &  & 2 \\
$9_{15}$ &  & 2\;0 &  $10_{16}$ & 2\;0\;0 &  &  $10_{66}$ & 1 &  &          $10_{116}$ &  &  &
\end{tabular}
\end{small}
\end{center}
\caption{Khovanov cobordism maps induced by the unique incompressible Seifert surfaces $\S\colon L\to\emptyset$ and their mirror $-\S\colon{-L}\to\emptyset$.}\label{table-Seifert}
\end{table}
}

For non-alternating knots, Seifert's algorithm often still yields minimal genus Seifert surfaces. Only 18 knots out of 249 required additional adjustments. For these cases, we repeatedly perturbed the diagram using Reidemeister moves (occasionally increasing the crossing number by 1 or 2) until we successfully obtained a minimal genus Seifert surface using Seifert's algorithm in every instance. The PD codes and minimal genus data were sourced from KnotInfo \cite{knotinfo}.

Using this data of Seifert surfaces, we computed the cobordism maps in matrix form for both $\S$ and $-\S$, the mirror of the surface (bounded by the mirror of the knot). \Cref{table-Seifert} shows the resulting matrices, where an empty cell means $k=0$, and an entry such as $2\;0\;0$ indicates $k=3$.

Viewing the cobordism $\S$ in the opposite direction, denoted $\S^\star$, we obtain a map $\KH^{0,0}(\emptyset)\to\KH^{0,\chi}(L)$, which is determined by the Khovanov--Jacobsson class of $\S^\star$. However, these maps $\KH(\S^\star)$ provide no new information, since $\CKH(-\S)$ and $\CKH(\S^\star)^\star$ are canonically related as described in \cref{sec:mirroring}.

There have been studies on the Khovanov--Jacobsson class of surfaces obtained via Seifert's algorithm \cites{Sundberg-Swann, Elliott1, Elliott2}, but only on the chain level rather than homology. These studies help interpret some of the results in \cref{table-Seifert}, using the state graph $\Gamma_\sigma$ of the Seifert smoothing $\sigma$ of a diagram. This graph's vertices represent the loops of $D_\sigma$, while its edges correspond to the crossings of the link diagram connecting two loops. The edges are coloured red or blue based on whether the crossing is positive or negative (i.e. 0-smoothed or 1-smoothed in $\sigma$). The Khovanov--Jacobsson chain element's structure only depends on the Betti numbers $b_0,b_1$ of the red subgraph of $\Gamma_\sigma$ \cite{Sundberg-Swann}.

		\subsection{Ribbon disks from symmetries}
Let $L$ be a ribbon knot with ribbon disk $\S$ and suppose that $L$ has a non-trivial symmetry: a diffeomorphism $\varphi\colon S^3\to S^3$ such that $\varphi(L)=L$ (disregarding orientation of $L$) and $\varphi$ is not isotopic to the identity through diffeomorphisms that fix $L$ setwise. Viewing $\S$ as an immersed surface in $S^3$ with ribbon singularities, consider $\varphi(\S)$. This is also a ribbon disk for $L$, which often differs from $\S$ (up to smooth isotopy relative to the boundary in $D^4$). For instance, the two exotically knotted ribbon disks in \cite{Hayden-Sundberg}*{Figure 1} and the annuli in \cite{Feher-pqkn}*{Theorem 1} are both related by symmetry and distinguished smoothly by Khovanov homology.

In this section, we study ribbon disks obtained via symmetries for all prime ribbon knots with at most 9 crossings and for certain 10-crossing ribbon knots, using Khovanov homology.

The group of diffeomorphisms of the pair $(S^3,L)$ up to isotopy is called the \emph{symmetry group} of $L$. If $L$ is hyperbolic, this group is finite, and can be computed from the hyperbolic triangulation of the knot complement. KnotInfo \cite{knotinfo} provides data on these symmetry groups, summarized in \cref{table-ribbon}. Here, $D_n$ denotes the dihedral group of $2n$ elements, corresponding to the symmetry group of a regular $n$-gon. The table also lists the number of different ribbon disks we identified, distinguished by Khovanov homology.

\begin{table}[h]
\begin{center}
\renewcommand{\arraystretch}{1.5}
\begin{tabular}{r|ccccccccc}
Knot               & $6_1$ & $8_8$ & $8_9$ & $8_{20}$ & $9_{27}$ & $9_{41}$ & $9_{46}$ & $10_3$ & $10_{123}$ \\
\hline
Symmetry group     & $D_2$ & $D_2$ & $D_4$ & $D_1$    & $D_2$    & $D_3$    & $D_2$    & $D_2$  & $D_{10}$ \\
Ribbon disks found & 2     & 2     & 4     & 1        & 2        & 2        & 2        & 2      & 5 
\end{tabular}
\renewcommand{\arraystretch}{1.0}
\end{center}
\caption{Ribbon disks distinguished by Khovanov homology.}\label{table-ribbon}
\end{table}

Knowing the symmetry group of a knot does not always make it straightforward to identify its symmetries. A knot $L$ is called \emph{reversible}, if there exists a symmetry $\varphi$ that reverses the orientation of $L$. For any reversible prime knot with at most 10 crossings, there exists a \emph{symmetric diagram}, where an orientation-reversing symmetry $\varphi$ is realized as a $180^\circ$ rotation, as listed in \cites{sakuma,lamm}. Every ribbon knot up to 10 crossings that admits a non-trivial symmetry is reversible \cite{knotinfo}.

\begin{figure}[!p]
\centering
\par{\centering\def\svgwidth{0.95\textwidth}\input{./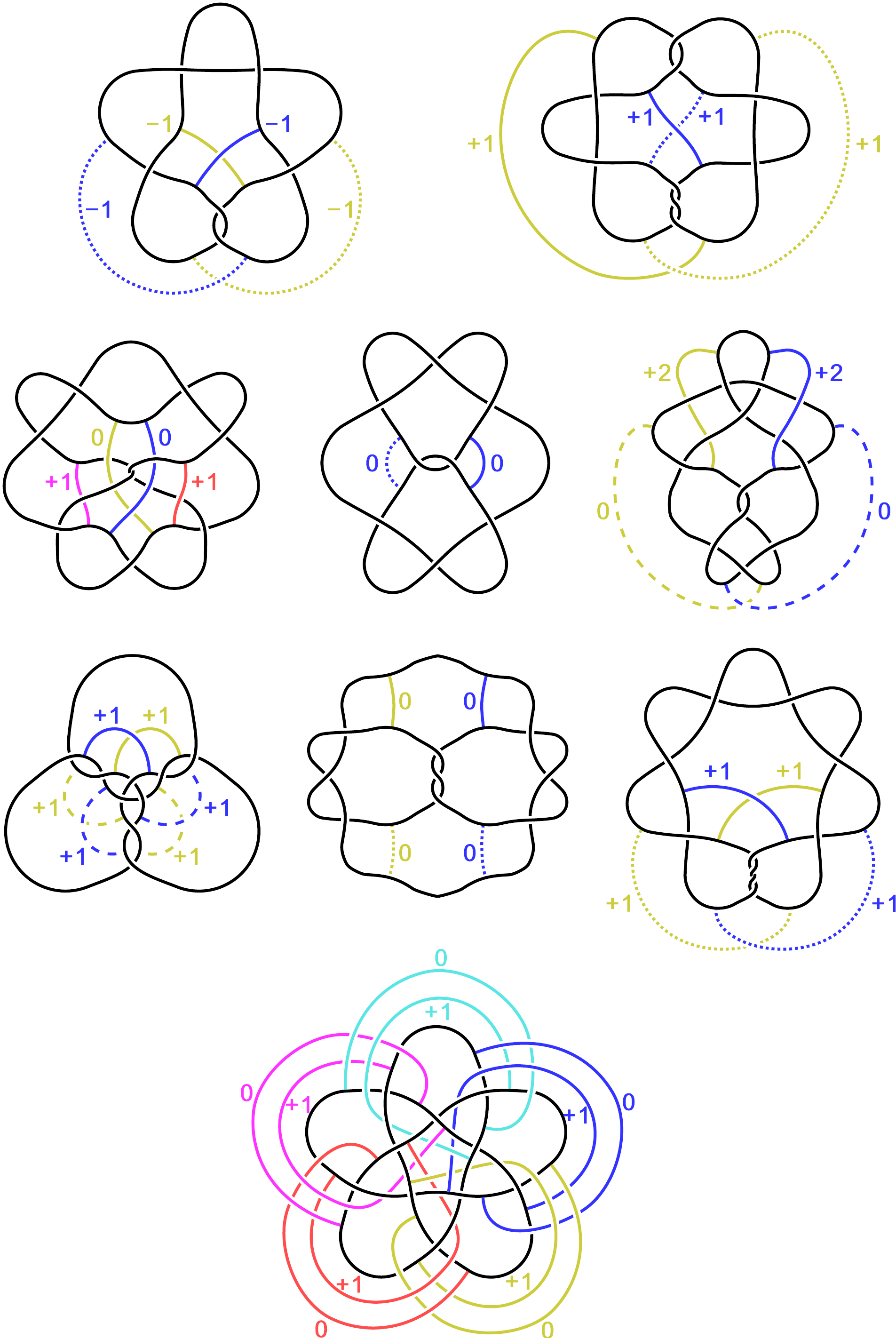_tex}\par}
\caption{Band diagrams for ribbon disks obtained by symmetry. Solid lines of different colours represent non-isotopic disks distinguished by Khovanov homology.}
\label{fig:ribbon}
\end{figure}

Eisermann--Lamm \cite{Eisermann-Lamm} provide band diagrams for one ribbon disk for each ribbon knot up to 10 crossings (given as symmetric union diagrams). Using KLO \cite{KLO}, we manually transformed these into symmetric diagrams to find additional ribbon disks derived from the symmetries. In certain cases ($8_9$ and $9_{27}$), further manual manipulations were performed to search for the remaining symmetries. These ribbon disks were then compared using our Khovanov homology program. The source code for these computations is provided in the ancillary file \mintinline{text}{ribbon_disks.py} and is also available on GitHub \cite{github}.

\Cref{fig:ribbon} shows the band diagrams for ribbon disks obtained via symmetries, layered over the same knot diagram. With the exception of $10_{123}$, each band diagram consists of a single band. Solid lines represent ribbon disks that are distinguished by Khovanov homology. Dotted lines indicate disks that are isotopic to others, while dashed lines represent disks that cannot be distinguished by Khovanov homology (both these disks and their mirrors share the same Khovanov--Jacobsson class with another disk in the figure). Disks sharing the same Khovanov homology are shown in the same colour.

\begin{itemize}
\item $6_1$, $8_8$, $9_{46}$, $10_3$: Likely all four symmetries were identified, yielding two distinct ribbon disks. The other two disks are equivalent to the first two via band isotopy.
\item $8_9$: Four of eight symmetries were found, resulting in four distinct ribbon disks.
\item $8_{20}$: Both symmetries were identified, but they produce the same ribbon disk.
\item $9_{27}$: Likely all four symmetries were found, yielding at least two distinct ribbon disks. The other two are not distinguished by Khovanov homology but are not evidently the same band diagram.
\item $9_{41}$: Likely all six symmetries were identified, resulting in at least two distinct ribbon disks. The other four are not distinguished by Khovanov homology.
\item $10_{123}$: Chosen for its high number of symmetries, likely all 20 were identified, yielding at least five distinct ribbon disks. The remaining disks are not distinguished by Khovanov homology.
\end{itemize}
Some of these ribbon disks have already been observed to be non-isotopic. For instance, see \cite{Fox}*{Examples 10--11} for $6_1$ and \cites{Miller, Sundberg-Swann} for $9_{46}$.

\begin{bibdiv}
\begin{biblist}

\bib{Bar-Natan-tangles}{article}{
      author={Bar-Natan, Dror},
       title={Khovanov's homology for tangles and cobordisms},
        date={2005},
        ISSN={1465-3060,1364-0380},
     journal={Geom. Topol.},
      volume={9},
       pages={1443\ndash 1499},
         url={https://doi.org/10.2140/gt.2005.9.1443},
      review={\MR{2174270}},
}

\bib{Bar-Natan-algorithm}{article}{
      author={Bar-Natan, Dror},
       title={Fast {K}hovanov homology computations},
        date={2007},
        ISSN={0218-2165,1793-6527},
     journal={J. Knot Theory Ramifications},
      volume={16},
      number={3},
       pages={243\ndash 255},
         url={https://doi.org/10.1142/S0218216507005294},
      review={\MR{2320156}},
}

\bib{Eisermann-Lamm}{article}{
      author={Eisermann, Michael},
      author={Lamm, Christoph},
       title={A refined {J}ones polynomial for symmetric unions},
        date={2011},
        ISSN={0030-6126},
     journal={Osaka J. Math.},
      volume={48},
      number={2},
       pages={333\ndash 370},
         url={http://projecteuclid.org/euclid.ojm/1315318344},
      review={\MR{2831977}},
}

\bib{Elliott2}{article}{
      author={Elliott, Andrew},
       title={Graphical methods establishing nontriviality of state cycle
  {K}hovanov homology classes},
        date={2009},
      eprint={0907.0396},
         url={https://arxiv.org/abs/0907.0396},
}

\bib{Elliott1}{book}{
      author={Elliott, Andrew},
       title={State cycles, quasipositive modification, and constructing
  {H}-thick knots in {K}hovanov homology},
   publisher={ProQuest LLC, Ann Arbor, MI},
        date={2010},
        ISBN={978-1124-20756-8},
  url={http://gateway.proquest.com/openurl?url_ver=Z39.88-2004&rft_val_fmt=info:ofi/fmt:kev:mtx:dissertation&res_dat=xri:pqdiss&rft_dat=xri:pqdiss:3421198},
        note={Thesis (Ph.D.)--Rice University},
      review={\MR{2782376}},
}

\bib{Feher-pqkn}{article}{
      author={Fehér, Zsombor},
       title={Non-isotopic seifert surfaces in the 4-ball},
        date={2023},
      eprint={2304.12113},
         url={https://arxiv.org/abs/2304.12113},
}

\bib{Fox}{incollection}{
      author={Fox, R.~H.},
       title={A quick trip through knot theory},
        date={1961},
   booktitle={Topology of 3-manifolds and related topics ({P}roc. {T}he {U}niv.
  of {G}eorgia {I}nstitute, 1961)},
   publisher={Prentice-Hall, Inc., Englewood Cliffs, NJ},
       pages={120\ndash 167},
      review={\MR{140099}},
}

\bib{Gabai}{article}{
      author={Gabai, David},
       title={Genera of the alternating links},
        date={1986},
        ISSN={0012-7094,1547-7398},
     journal={Duke Math. J.},
      volume={53},
      number={3},
       pages={677\ndash 681},
         url={https://doi.org/10.1215/S0012-7094-86-05336-6},
      review={\MR{860665}},
}

\bib{Hayden-atomic}{article}{
      author={Hayden, Kyle},
       title={An atomic approach to {W}all-type stabilization problems},
        date={2023},
      eprint={2302.10127},
         url={https://arxiv.org/abs/2302.10127},
}

\bib{Hayden-lecture}{article}{
      author={Hayden, Kyle},
       title={Lecture notes on link homologies and knotted surfaces},
        date={2025},
      eprint={2507.15305},
         url={https://arxiv.org/abs/2507.15305},
}

\bib{HKMPS}{article}{
      author={Hayden, Kyle},
      author={Kim, Seungwon},
      author={Miller, Maggie},
      author={Park, JungHwan},
      author={Sundberg, Isaac},
       title={Seifert surfaces in the 4-ball},
        date={2022},
      eprint={2205.15283},
}

\bib{Hayden-Sundberg}{article}{
      author={Hayden, Kyle},
      author={Sundberg, Isaac},
       title={Khovanov homology and exotic surfaces in the 4-ball},
        date={2024},
        ISSN={0075-4102,1435-5345},
     journal={J. Reine Angew. Math.},
      volume={809},
       pages={217\ndash 246},
         url={https://doi.org/10.1515/crelle-2024-0001},
      review={\MR{4726569}},
}

\bib{Jacobsson}{article}{
      author={Jacobsson, Magnus},
       title={An invariant of link cobordisms from {K}hovanov homology},
        date={2004},
        ISSN={1472-2747,1472-2739},
     journal={Algebr. Geom. Topol.},
      volume={4},
       pages={1211\ndash 1251},
         url={https://doi.org/10.2140/agt.2004.4.1211},
      review={\MR{2113903}},
}

\bib{Kakimizu}{article}{
      author={Kakimizu, Osamu},
       title={Classification of the incompressible spanning surfaces for prime
  knots of 10 or less crossings},
        date={2005},
        ISSN={0018-2079},
     journal={Hiroshima Math. J.},
      volume={35},
      number={1},
       pages={47\ndash 92},
         url={http://projecteuclid.org/euclid.hmj/1150922486},
      review={\MR{2131376}},
}

\bib{github}{misc}{
      author={Fehér, Zsombor},
       title={Khovanov {C}obordism {C}alculator},
   publisher={software},
        date={2024},
        note={Available at
  \url{https://github.com/ZsomborFeher/khovanov-cobordism}},
       label={KhCC}
}

\bib{Khovanov}{article}{
      author={Khovanov, Mikhail},
       title={A categorification of the {J}ones polynomial},
        date={2000},
        ISSN={0012-7094,1547-7398},
     journal={Duke Math. J.},
      volume={101},
      number={3},
       pages={359\ndash 426},
         url={https://doi.org/10.1215/S0012-7094-00-10131-7},
      review={\MR{1740682}},
}

\bib{knotinfo}{misc}{
      author={Livingston, Charles},
      author={Moore, Allison~H.},
       title={KnotInfo: Table of Knot Invariants},
        date={2025},
        note={Available at \url{https://knotinfo.org}},
       label={KInfo}
}

\bib{knotjob}{misc}{
      author={Schütz, Dirk},
       title={Knot{J}ob},
   publisher={software},
        date={2023},
        note={Available at
  \url{https://www.maths.dur.ac.uk/users/dirk.schuetz/knotjob.html}},
       label={KJob}
}

\bib{KLO}{misc}{
      author={Swenton, Frank},
       title={{KLO} ({K}not-{L}ike {O}bjects) software (version 0.978 alpha)},
        date={2024},
        note={Available at \url{https://www.klo-software.net}},
       label={KLO}
}

\bib{lamm}{article}{
      author={Lamm, Christoph},
       title={Symmetric diagrams for all strongly invertible knots up to 10
  crossings},
        date={2022},
      eprint={2210.13198},
         url={https://arxiv.org/abs/2210.13198},
}

\bib{Lipshitz-Sarkar}{article}{
      author={Lipshitz, Robert},
      author={Sarkar, Sucharit},
       title={A mixed invariant of nonorientable surfaces in equivariant
  {K}hovanov homology},
        date={2022},
        ISSN={0002-9947,1088-6850},
     journal={Trans. Amer. Math. Soc.},
      volume={375},
      number={12},
       pages={8807\ndash 8849},
         url={https://doi.org/10.1090/tran/8736},
      review={\MR{4504654}},
}

\bib{Miller}{article}{
      author={Miller, Allison~N.},
      author={Powell, Mark},
       title={Stabilization distance between surfaces},
        date={2019},
        ISSN={0013-8584,2309-4672},
     journal={Enseign. Math.},
      volume={65},
      number={3-4},
       pages={397\ndash 440},
         url={https://doi.org/10.4171/lem/65-3/4-4},
      review={\MR{4113047}},
}

\bib{sage}{manual}{
      author={Stein, W.\thinspace{}A.},
      author={others},
       title={{S}age {M}athematics {S}oftware ({V}ersion 9.5)},
organization={The Sage Development Team},
        date={2022},
        note={Available at \url{http://www.sagemath.org}},
       label={Sage}
}

\bib{sakuma}{incollection}{
      author={Sakuma, Makoto},
       title={On strongly invertible knots},
        date={1986},
   booktitle={Algebraic and topological theories ({K}inosaki, 1984)},
   publisher={Kinokuniya, Tokyo},
       pages={176\ndash 196},
      review={\MR{1102258}},
}

\bib{SnapPy}{misc}{
      author={Culler, Marc},
      author={Dunfield, Nathan~M.},
      author={Goerner, Matthias},
      author={Weeks, Jeffrey~R.},
       title={Snap{P}y, a computer program for studying the geometry and
  topology of $3$-manifolds},
        date={2024},
        note={Available at \url{http://snappy.computop.org}},
       label={Snap}
}

\bib{Sundberg-Swann}{article}{
      author={Sundberg, Isaac},
      author={Swann, Jonah},
       title={Relative {K}hovanov-{J}acobsson classes},
        date={2022},
        ISSN={1472-2747,1472-2739},
     journal={Algebr. Geom. Topol.},
      volume={22},
      number={8},
       pages={3983\ndash 4008},
         url={https://doi.org/10.2140/agt.2022.22.3983},
      review={\MR{4562563}},
}

\bib{Sundberg-PhD}{book}{
      author={Sundberg, Isaac},
       title={Khovanov homology \& uniqueness of surfaces in the 4-ball},
   publisher={ProQuest LLC, Ann Arbor, MI},
        date={2022},
        ISBN={979-8819-35944-0},
        note={Thesis (Ph.D.)--Bryn Mawr College. Available at
  \url{https://repository.brynmawr.edu/dissertations/225/}.},
}

\end{biblist}
\end{bibdiv}

\end{document}